\documentclass[11pt,twoside,a4paper,reqno]{amsart}

\usepackage[a4paper,width=160mm,top=25mm,bottom=30mm]{geometry}
\usepackage{amsmath}
\usepackage{amssymb}
\usepackage{amsthm}
\usepackage{mathrsfs}
\usepackage{mathtools}
\mathtoolsset{showonlyrefs}

\usepackage{graphicx}
\usepackage{float}
\usepackage{caption}
\usepackage[utf8]{inputenc}
\usepackage[T1]{fontenc}
\usepackage[english]{babel}

\usepackage{enumerate}
\usepackage[inline]{enumitem}
\setlist[enumerate]{leftmargin=*, label=\arabic*.}
\usepackage{xcolor}
\usepackage{algorithm}
\usepackage{algpseudocode}
\usepackage{todonotes}
\setuptodonotes{inline}
\usepackage{hyperref}
\usepackage{footnote}
\makesavenoteenv{abstract}

\usepackage[
  style=alphabetic,
  doi=false,
  isbn=false,
  url=false,
  eprint=false
]{biblatex}
\theoremstyle{plain}
\newtheorem{theorem}{Theorem}

\newtheorem{proposition}[theorem]{Proposition}

\newtheorem{lemma}{Lemma}

\theoremstyle{remark}

\theoremstyle{definition}

\graphicspath{{./Images/}}
\newtheorem{claim}{Claim}

\title[Counterexamples for site minimal cutsets]{Counterexamples to the site-percolation analogues\\ of the Easo--Severo--Tassion cutset theorems}
\author{Joel Bassil}

\begin{document}

\begin{abstract}
Easo, Severo and Tassion \cite{easoCountingMinimalCutsets2025} proved two theorems about minimal \textit{edge} cutsets in infinite graphs: that non-triviality of the uniform critical percolation parameter is equivalent to exponential growth of the number of minimal edge cutsets of size $n$ separating a vertex from infinity, and that uniform transience is an alternative sufficient assumption. We show that the naive extensions of (the original contribution to) these theorems to site percolation and \textit{vertex} cutsets are false; our counterexample involves modifying a rooted binary tree and replacing edges with gadgets that allow many choices for where to block (in constructing a cutset) one of two local routes, while keeping every vertex uniformly close to a supercritical, uniformly transient shortcut skeleton.
\end{abstract}

\maketitle

\vspace{-2.5em}
\renewcommand{\abstractname}{AI Use Statement}    
\begin{abstract}
The counterexample in this note was found solely by ChatGPT 5.5 pro, which also produced the initial draft of its write-up. The author takes full responsibility for the mathematical content of this note and has independently verified it themselves, as well as citing relevant theory where appropriate. Readers are encouraged to verify this counterexample for themselves.
\end{abstract}

\section{Introduction and statements}

Let $G=(V,E)$ be an infinite, connected, locally finite graph. A finite set $W\subseteq V$ is a \emph{vertex cutset} from $v\in V\setminus W$ to infinity if $v$ lies in a finite connected component of the graph induced by $V\setminus W$. It is \emph{minimal} if no proper subset of $W$ is a vertex cutset from $v$ to infinity. Write
\[
  \mathcal{Q}^{\mathrm{site}}_n(v):=\bigl\{W\subseteq V:\ W \text{ is a minimal vertex cutset from } v \text{ to } \infty,\ |W|=n\bigr\},
\]
\[
  q^{\mathrm{site}}_n:=\sup_{v\in V}\bigl|\mathcal{Q}^{\mathrm{site}}_n(v)\bigr|,
  \qquad
  \kappa^{\mathrm{site}}(G):=\sup_{n\ge 1}\bigl(q^{\mathrm{site}}_n\bigr)^{1/n},
\]
with the conventions $|\emptyset|:=0$ and $\infty^{1/n}:=\infty$.

For Bernoulli site percolation of parameter $p\in[0,1]$ on $G$, in which each vertex is open independently with probability $p$, let $\mathbb{P}_p$ denote the law, let $\{v\leftrightarrow\infty\}$ be the event that $v$ is open and lies in an infinite open connected component, and set
\[
  \theta_v(p):=\mathbb{P}_p(v\leftrightarrow\infty),
  \qquad
  \theta^*(p):=\inf_{v\in V}\theta_v(p),
\]
\[
  p_c^*(G):=\inf\bigl\{p\in[0,1]:\theta^*(p)>0\bigr\}.
\]

Finally, regard $G$ as an electrical network with unit conductance on every edge, let $\mathcal{C}_G(v\leftrightarrow\infty)$ denote the effective conductance from $v$ to infinity, and call $G$ \emph{uniformly transient} if
\begin{equation}\label{eq:ut}
  \inf_{v\in V}\mathcal{C}_G(v\leftrightarrow\infty)>0.
\end{equation}
Equivalently, $\inf_{v\in V}\bigl[d_v\,\mathbb{P}_v(\forall t\ge 1: X_t\neq v)\bigr]>0$, where $(X_t)_{t\ge0}$ is simple random walk on $G$ and $d_v$ is the degree of $v$.

The following two statements are the naive site percolation/vertex cutset analogues of (the original contributions of) \cite{easoCountingMinimalCutsets2025}.

\begin{claim}\label{conj:A}
For every infinite, connected, locally finite graph $G$,
\[
p_c^*(G)<1 \implies \kappa^{\mathrm{site}}(G)<\infty .
\]
\end{claim}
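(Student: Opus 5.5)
The claim is false, so rather than prove it I will build a counterexample: a graph $G$ with $p_c^*(G)<1$ but $\kappa^{\mathrm{site}}(G)=\infty$. Local finiteness (not bounded degree) is all that is required, so it suffices to make $|\mathcal{Q}^{\mathrm{site}}_n(v)|$ unbounded over $v$ for one fixed $n$; then $q^{\mathrm{site}}_n=\infty$.

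The natural way to make cutset counts large is to give a cutset many interchangeable places to put one of its vertices, for instance anywhere along a long path. A bare long path, however, destroys $\theta^*$, since its middle vertices are far from everything. The construction therefore has to keep every vertex uniformly close to a robust percolating skeleton. The two requirements pull against each other, and designing the gadget to satisfy both is the main obstacle.

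\emph{Gadget.} Take the rooted binary tree $T$. Replace each edge $uw$ of $T$, with $w$ the child at depth $k$, by the following gadget with $L_k:=k$:
\begin{itemize}
\item a short route $u - a - w$;
\item a hub route $u - h - w$;
\item a long path $u - x_1 - \cdots - x_{L_k} - w$;
\item the extra edges $x_i h$ for every $i$.
\end{itemize}
The degree of $h$ is finite, so $G$ is locally finite.

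\emph{Step 1: counting minimal cutsets.} I will check that within one gadget the set $\{a,h,x_i\}$ separates $u$ from $w$ inside the gadget, for each $i$, and is minimal.
\begin{itemize}
\item Restoring $a$ or $h$ reopens a length-two route.
\item Restoring $x_i$ reopens the long path.
\end{itemize}
Any other path from $u$ to $w$ within the gadget must pass through $a$, through $h$, or along the whole long path, so these are the only ways to reconnect.

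Now fix a tree vertex $v$ at depth $k\ge1$. Let $W$ be the union of such triples in the three gadgets incident to $v$: the parent gadget and the two child gadgets. Then the component of $v$ is finite. The key check is that it contains only $v$ and the part of each gadget on $v$'s side, and it is bounded because $L_k$ is finite.

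Minimality follows gadget by gadget. Restoring any single vertex reconnects $v$ through that gadget to an infinite part of $T$; here I use that $T$ minus a single vertex still has infinite components in every direction away from $v$. This produces at least $L_k L_{k+1}^2\ge k^3$ distinct minimal vertex cutsets of size $9$ from $v$. Letting $k\to\infty$ gives $q^{\mathrm{site}}_9=\infty$, hence $\kappa^{\mathrm{site}}(G)=\infty$.

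\emph{Step 2: uniform percolation.} Let $S$ be the skeleton, formed by the tree vertices together with all $a$ and $h$ vertices. Each edge of $T$ now appears as a pair of parallel paths of length two, so $S$ contains a subdivided binary tree. Subdividing each edge once squares the branching, so site percolation on $S$ has critical value at most $1/\sqrt2<1$. For $p$ above it, every tree vertex $w$ satisfies $\theta_w(p)\ge c(p)>0$.

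To get a bound uniform in $w$, I will use that the subtree below any tree vertex is isomorphic to the whole binary tree. Each remaining vertex is within distance $2$ of a tree vertex: $x_i$ via $h$ and the child $w$, and $a,h$ directly. Hence
\[
\theta_{x_i}(p)\ \ge\ p^2\,\theta'_w(p),
\]
where $\theta'_w$ is the probability of percolating downward in the subtree below $w$. That quantity is the same for all $w$ and is positive for $p>1/\sqrt2$. Therefore $\theta^*(p)>0$ for such $p$, and $p_c^*(G)<1$.

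The delicate point remains Step 1. I must confirm that the edges $x_i h$, which keep $\theta^*$ positive, do not create alternative routes that spoil either the separation property or the minimality of the triples; this is exactly why $h$ is forced into the cutset. I would also verify that the count is genuinely unbounded, i.e.\ that distinct choices of $i$ give distinct sets $W$, which they clearly do.
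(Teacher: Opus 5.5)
Your counterexample is correct. Step 2 is essentially the paper's percolation argument. The hub routes form a once-subdivided binary tree, every vertex is within distance two of the downward skeleton subtree of some tree vertex, and all these subtrees are isomorphic, so $p_c^*(G)\le 2^{-1/2}$. The paper makes the bound explicit via the fixed point $a=(2p^2-1)/p^4$ of $s\mapsto 1-(1-p^2s)^2$, which gives $\theta^*(p)\ge p^3a$.

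The real difference is in the counting step.

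\begin{itemize}
\item The paper works at the single vertex $\rho$. It cuts every level-$m$ gadget at once, removing $\{c_e,x_{e,j_e}\}$ from each, which gives $|\mathcal{Q}^{\mathrm{site}}_{2^{m+1}}(\rho)|\ge (m^2)^{2^m}$. That is super-exponential growth at one fixed vertex.
\item You instead isolate a deep vertex $v$ by cutting only its three incident gadgets. This gives minimal cutsets of fixed size $9$, numbering $k(k+1)^2$ where $k$ is the depth of $v$, hence $q^{\mathrm{site}}_9=\infty$.
\end{itemize}

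Your version is shorter and needs only that the gadget lengths are unbounded. It suffices because $q^{\mathrm{site}}_n$ takes a supremum over vertices. The conclusion is weaker, though: it says nothing about growth at a fixed vertex. The paper's part (iii) would also refute a pointwise version of the claim. Your graph does have this stronger property too: cutting all level-$m$ gadgets simultaneously yields at least $m^{2^m}$ minimal cutsets of size $3\cdot 2^m$ from $\rho$.

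Two smaller remarks. The worry in your last paragraph about the spoke edges $x_ih$ is already settled by your own Step 1 observation: once $a$ and $h$ are deleted, the only remaining edges of the gadget are those of the long path. The extra short route $u-a-w$ is harmless but unnecessary. The hub route alone already supplies the subdivided-tree skeleton, which is why the paper's cutsets need only two vertices per gadget.
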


\begin{claim}\label{conj:B}
Every infinite, connected, locally finite, uniformly transient graph $G$ satisfies 
\[
\kappa^{\mathrm{site}}(G)<\infty.
\]
\end{claim}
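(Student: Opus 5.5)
We will disprove Claim~\ref{conj:B}, and Claim~\ref{conj:A} along with it, using a single counterexample: an infinite, connected, locally finite graph $G$ that is uniformly transient and has $p_c^*(G)<1$, yet has $\kappa^{\mathrm{site}}(G)=\infty$. The idea is to make every vertex uniformly close to a rigid skeleton that carries both percolation and current. Alongside the skeleton we add, at each edge, a ``parallel'' route on which the blocking vertex of a cutset can be placed in a number of ways that grows with depth.

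\emph{Construction.} Let $T$ be the rooted binary tree and fix integers $L_k\to\infty$, to be chosen later. Replace each edge $\{u,w\}$ of $T$, with $w$ a child of $u$ at depth $k+1$, by the following gadget. Keep a middle vertex $m=m_{uw}$ adjacent to both $u$ and $w$; these are the skeleton edges. Add a path $u=x_0,x_1,\dots,x_{L_k},x_{L_k+1}=w$, and join every interior vertex $x_i$ to $m$. The skeleton $S$, namely the binary tree with every edge subdivided once, is then a subgraph of $G$. Every vertex of $G$ lies at distance at most $1$ from $S$. The degrees are finite, since $m_{uw}$ has degree $L_k+2$, so $G$ is locally finite though not of bounded degree.

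\emph{The two positive properties.} For site percolation, every vertex $v$ of $S$ is the root of a copy of the subdivided binary tree inside $S$. This copy is a Galton--Watson-type tree in which each original vertex has two children, each reached through one extra vertex. Hence for $p$ close to $1$ (any $p$ with $2p^2>1$ suffices) we get $\theta_v(p)\ge c(p)>0$ uniformly over $v\in S$. A vertex $x_i$ off $S$ is adjacent to $m\in S$, so $\theta_{x_i}(p)\ge p\,c(p)$, which gives $p_c^*(G)<1$. For uniform transience, Rayleigh monotonicity lets us restrict to the subnetwork consisting of the edge $x_i m$ together with the subtree of $S$ rooted at $m$. This subtree has effective conductance to infinity bounded below by a constant $c_0>0$ independent of the root, because it is a subdivided binary tree. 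Putting a unit resistor in series gives $\mathcal{C}_G(v\leftrightarrow\infty)\ge (1+c_0^{-1})^{-1}$ for every $v$, so \eqref{eq:ut} holds.

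\emph{Counting cutsets; the main point.} Fix the root $\rho$ and a depth $k$. For each of the $2^{k+1}$ gadgets between depths $k$ and $k+1$, select the middle vertex $m$ together with one interior path vertex $x_{i}$, where $1\le i\le L_k$. Let $W$ be the union of these selections, so $|W|=2^{k+2}$. We must check two things, and this is where care is needed.

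First, $W$ is a cutset. After deleting $m$ and $x_i$, the vertices $x_1,\dots,x_{i-1}$ connect only to $u$ and the vertices $x_{i+1},\dots$ connect only to $w$, since their common neighbour $m$ is gone. So $\rho$'s component is contained in the finite set of the first $k$ levels together with finite pieces of gadgets.

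Second, $W$ is minimal. Restoring $m$ reconnects $u$ to $w$ via $u\,m\,w$. Restoring $x_i$ reconnects $u$ to $w$ along the path. In either case $\rho$ reaches the infinite subtree below $w$.

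Distinct choices give distinct sets, so
\[
q^{\mathrm{site}}_{2^{k+2}}\ge L_k^{2^{k+1}},\qquad\text{and hence}\qquad \kappa^{\mathrm{site}}(G)\ge \sup_k L_k^{1/2}=\infty .
\]
The only delicate step is the minimality check. In particular one must confirm that no other route survives, for instance through $m$ from a different index. This holds because the $x_i$ meet the rest of the graph only through $u$, $w$ and $m$. Given that, $G$ contradicts both claims.
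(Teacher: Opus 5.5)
Your proposal is correct and is essentially the paper's own counterexample. You use the same gadget on the binary tree (a hub adjacent to both endpoints plus a spoked long path) and the same cutsets (the hub together with one path vertex in every gadget of a fixed level), giving $L_k^{2^{k+1}}$ minimal cutsets of size $2^{k+2}$; you also use the same subdivided-binary-tree skeleton for uniform percolation (threshold $2p^2>1$) and uniform transience, differing only cosmetically in invoking Galton--Watson survival and a series law on a Rayleigh-restricted subnetwork where the paper uses an explicit fixed-point recursion and the triangle inequality for effective resistance.
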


The purpose of this note is to record that both Claim~\ref{conj:A} and Claim~\ref{conj:B} are false, and refuted by a single graph.

\begin{theorem}\label{thm:main}
There exists an infinite, connected, locally finite graph $G$ with a distinguished vertex $\rho$ such that
\begin{enumerate}
  \item[(i)] $\displaystyle\inf_{z\in V(G)}\mathbb{P}_{p}(z\leftrightarrow\infty)>0$ for every $p>2^{-1/2}$, and in particular $p_c^*(G)<1$;
  \item[(ii)] $\displaystyle\mathcal{C}_G(z\leftrightarrow\infty)\ge \tfrac14$ for every $z\in V(G)$, so $G$ is uniformly transient;
  \item[(iii)] $\displaystyle\sup_{n\ge1}\bigl|\mathcal{Q}^{\mathrm{site}}_n(\rho)\bigr|^{1/n}=\infty$, and hence $\kappa^{\mathrm{site}}(G)=\infty$.
\end{enumerate}
Consequently Claims~\ref{conj:A} and~\ref{conj:B} are both false.
\end{theorem}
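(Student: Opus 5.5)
The plan is to build $G$ from two layers. The first is a \emph{skeleton} $S$: a rooted binary tree with root $\rho$ in which every edge is subdivided once. The second consists of \emph{gadgets} glued along the skeleton, and the gadget at depth $k$ depends on a parameter $m_k\to\infty$. The exponent $2^{-1/2}$ in (i) points to this skeleton. Site percolation on $S$ from a branch vertex is a Galton--Watson process with offspring $\mathrm{Bin}(2,p^2)$, which is supercritical exactly when $2p^2>1$.

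For the gadgets, each skeleton edge $u\!-\!s\!-\!w$ at depth $k$ receives a second local route from $u$ to $w$. This route is a path $a_1,\dots,a_{m_k}$, and each $a_i$ also gets short \emph{shortcut} edges into a fresh copy of the skeleton. The copy is wired so that it only rejoins the rest of the graph below $w$, and it is attached on the far side of any cut placed at $a_j$. The aim is a combinatorial local picture: separating $u$ from $w$ inside the gadget costs exactly two vertices, namely $s$ and one vertex $a_j$ for any $j\in\{1,\dots,m_k\}$. At the same time, every vertex stays within bounded graph distance, say $\le 2$, of a skeleton vertex that has its own disjoint supercritical binary subtree.

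Steps (i) and (ii) should then be routine monotonicity arguments. For (ii), I would use Rayleigh monotonicity. Every $z$ is joined by a path of length at most $2$ to the root $r_z$ of a subdivided binary tree $S_z\subseteq G$. The conductance of $S_z$ from $r_z$ is computed from the recursion $C=2\cdot(1/(2+1/C))^{-1}$-type series/parallel relations, and it equals $1/2$ or something larger. Putting this in series with the connecting path gives $\mathcal{C}_G(z\leftrightarrow\infty)\ge(2+2)^{-1}=\tfrac14$, which matches the constant stated in (ii). For (i), the same embedding applies. The probability $\mathbb{P}_p(z\leftrightarrow\infty)$ is at least $p^{2}\,\theta_{S}(p)$: the factor $p^2$ covers opening the at most two connecting vertices, and $\theta_S(p)>0$ is the Galton--Watson survival probability. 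This bound is uniform in $z$ for every $p>2^{-1/2}$.

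For (iii), fix a depth $k$ and consider the finite set $W$ obtained by cutting every one of the $2^k$ depth-$k$ gadgets independently. Each gadget contributes $s$ together with a chosen $a_{j}$, plus whatever boundary vertices of the shortcut copies are needed so that the component of $\rho$ is finite. I would arrange the construction so that this last contribution is a fixed number $c_k$ of vertices, independent of the choices $j$. This gives a family of at least $m_k^{2^k}$ distinct vertex cutsets, all of size $n_k=2^k(2+c_k)$. I would then check minimality directly. For each $x\in W$, I would exhibit an infinite path that avoids $W\setminus\{x\}$ and starts from the component of $\rho$; the route through $a_{j'}$ for $j'<j$ plays this role for the chosen $a_j$. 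Given this, $|\mathcal{Q}^{\mathrm{site}}_{n_k}(\rho)|^{1/n_k}\ge m_k^{1/(2+c_k)}$. This tends to infinity provided $c_k$ is bounded while $m_k\to\infty$, or at least provided $m_k$ grows much faster than $c_k$.

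The main obstacle is the tension inside the gadget. The shortcuts needed for (i) and (ii) must not create new escape routes from $\rho$'s side of the cut. Such a route would either force extra vertices into $W$ in a $j$-dependent way, or destroy minimality. I would first try orienting each shortcut from $a_i$ into a skeleton subtree that hangs below $w$, so that it lies outside the component of $\rho$ for every choice of $j$. The danger is that the vertices $a_1,\dots,a_{j-1}$, which lie inside that component, then see infinity through their shortcuts. If that happens, I would switch to connecting each $a_i$ to the skeleton vertex $s'$ of a sibling edge one level deeper, paying for this with a slightly larger but still bounded $c_k$. I would then verify the cut structure carefully on a single gadget before taking the product over all $2^k$ gadgets.
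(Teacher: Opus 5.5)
Your arguments for (i) and (ii) are the paper's. The shortcut skeleton is the once-subdivided binary tree, and forward survival from a branch vertex is a Galton--Watson process with $\mathrm{Bin}(2,p^2)$ offspring; this is the paper's fixed point $a=(2p^2-1)/p^4$ of $f(s)=1-(1-p^2s)^2$. The skeleton's resistance to infinity is $2$ (your recursion should read $C=2/(2+1/C)$). A vertex at distance at most $2$ from a branch vertex therefore gets $R\le 4$ and survival probability at least $p^3a$.

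The gap is in (iii), which is where the content of the counterexample lies. You never fix the gadget, and the one you describe does not give cutsets of a common size. Suppose each $a_i$ has edges into a fresh (infinite) copy of the skeleton. After deleting $s$ and $a_j$, the vertices $a_1,\dots,a_{j-1}$ are still joined to $u$ and escape to infinity through those copies. Your requirement that the copy sit ``on the far side of any cut at $a_j$'' cannot hold for $i<j$. Blocking these escapes costs extra vertices, and their number generally depends on $j$: at least $j-1$ if each $a_i$ has its own copy. So different choice functions give cutsets of different sizes, and you lose the family of $m_k^{2^k}$ sets of one size $n_k$. Also, since $W$ is itself a cutset, any minimality witness for $a_j$ must pass through $a_j$; the full long path $u,a_1,\dots,a_{m_k},w$ is the natural one.

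The missing idea is to join every $a_i$ by a spoke to the subdivision vertex $s$ of the \emph{same} edge, which is the paper's hub $c_e$. Then $s$ is both the shortcut and the only exit from the interior of the long path. Once $s$ is deleted, $a_1,\dots,a_{j-1}$ have no escape, and the per-gadget cost is exactly $\{s,a_j\}$ (your $c_k=0$). Each $a_i$ is also at distance $2$ from $w$ via $s$, which is all that (i) and (ii) need. With $L_m=m^2$ this gives $(m^2)^{2^m}$ minimal cutsets of size $2^{m+1}$, so the $n$-th root is at least $m$.

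Your fallback can be made to work if it means joining all $a_i$ of one gadget to a \emph{single} deeper subdivision vertex $s'$, say that of a child edge of $w$. Then $\{s,a_j,s'\}$ per gadget is a minimal cutset with $c_k=1$, and the root is at least $m_k^{1/3}$. As written, though, it is an unverified contingency, and it is only a costlier version of taking $s'=s$.
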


The graph is constructed in Section~\ref{sec:constr}; parts (i), (iii) and (ii) are proved in Sections~\ref{sec:perc}, \ref{sec:count} and~\ref{sec:trans} respectively.

\section{The construction}\label{sec:constr}

Let $T$ be the rooted binary tree with root $\rho$, so that $\rho$ is at level $0$ and every vertex has exactly two children. For $m\ge1$ let $E_m$ denote the set of edges of $T$ joining level $m-1$ to level $m$, so
\[
  |E_m|=2^m,
\]
and set
\[
  L_m:=m^2 .
\]

We build a graph $G$ from $T$ by replacing every tree edge $e=uv\in E_m$, where $u$ is the parent and $v$ the child, with the following finite gadget. Delete the edge $uv$ and introduce new vertices
\[
  c_e,\ x_{e,1},\ x_{e,2},\ \dots,\ x_{e,L_m}.
\]
Add the \emph{long path}
\[
  u - x_{e,1} - x_{e,2} - \cdots - x_{e,L_m} - v,
\]
together with the \emph{shortcut} edges $uc_e$ and $c_ev$, and the \emph{spoke} edges $c_ex_{e,j}$ for $1\le j\le L_m$.

We call the vertices of $T$ the \emph{original} vertices of $G$, the vertices $c_e$ the \emph{hubs}, and the vertices $x_{e,j}$ the \emph{path vertices}. We retain the notions of level and of descendant from $T$.

The graph $G$ is infinite and connected. It is locally finite: an original vertex has degree at most $6$ (two edges into the gadget above it and two into each of the two gadgets below it), a path vertex has degree $3$, and
\[
  \deg(c_e)=L_m+2=m^2+2 \qquad (e\in E_m),
\]
which is finite for each fixed $e$. Note that $G$ has unbounded degree.

Throughout we write $H\subseteq G$ for the \emph{shortcut skeleton}: the subgraph consisting of all original vertices and all hubs, together with the edges $uc_e$ and $c_ev$ for every $e=uv$. Thus $H$ is exactly the rooted binary tree $T$ with each edge subdivided once.

\section{Uniform site percolation}\label{sec:perc}

\begin{lemma}\label{lem:perc}
Let $p>2^{-1/2}$ and set
\[
  a:=\frac{2p^2-1}{p^4}\in(0,1].
\]
Then $\inf_{z\in V(G)}\mathbb{P}_p(z\leftrightarrow\infty)\ \ge\ p^3a\ >\ 0$. In particular $p_c^*(G)<1$.
\end{lemma}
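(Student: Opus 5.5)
Percolation on the shortcut skeleton $H$ alone gives the bound; using more of $G$ only helps, by monotonicity.

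In $H$, each original vertex $u$ reaches each of its two children $v$ through the hub $c_{uv}$. In the induced open subgraph on $H$, the event that $u$ is connected to its child $v$ through open vertices therefore requires both $c_{uv}$ and $v$ to be open, which happens with probability $p^2$. Branching from an open original vertex $u$ is thus a Galton--Watson process with offspring distribution $\mathrm{Bin}(2,p^2)$. This holds because distinct children use disjoint pairs $\{c_e,v\}$, and the subtrees below distinct children are disjoint, so independence holds.

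Let $s$ be the survival probability of this process started from an open original vertex. Then $1-s$ is the smallest fixed point in $[0,1]$ of $f(x)=(1-p^2+p^2x)^2$. Setting $y=1-x$, the equation $y=1-(1-p^2y)^2=2p^2y-p^4y^2$ has the nonzero root
\[
  y=\frac{2p^2-1}{p^4}=a.
\]
This root lies in $(0,1]$ precisely when $p^2>1/2$; the upper bound $a\le 1$ is equivalent to $(p^2-1)^2\ge 0$. Since the mean $2p^2$ exceeds $1$, the survival probability is this nonzero root, so $s=a$. Consequently, for every original vertex $v$,
\[
  \mathbb{P}_p(v\leftrightarrow\infty)\ge p\,a,
\]
since $v$ itself must be open and the descendant process must survive. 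Survival gives an infinite open path in $H\subseteq G$.

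To handle every vertex $z$, I route it in a bounded number of steps to a child original vertex $v$ and then use that $v$'s subtree:
\begin{itemize}
  \item If $z$ is a path vertex $x_{e,j}$ or a hub $c_e$ with $e=uv$, require $z$, $c_e$ and $v$ to be open (for a hub, $z=c_e$, so this is one fewer vertex). Then demand survival of the Galton--Watson process rooted at $v$ inside $v$'s descendant subtree. That subtree is disjoint from $z$ and $c_e$, so the events are independent and the probability is at least $p^3a$ (at least $p^2a$ for a hub).
  \item If $z$ is an original vertex, the bound is at least $pa\ge p^3a$.
\end{itemize}
Taking the infimum over $z$ gives $\inf_z \mathbb{P}_p(z\leftrightarrow\infty)\ge p^3a>0$, and hence $p_c^*(G)\le 2^{-1/2}<1$.

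The only delicate points are the following. First, the correct root of the fixed-point equation must be identified as the survival probability; this is the standard fact that extinction is the smallest fixed point when the mean exceeds $1$. Second, the events used must be genuinely independent, that is, they must depend on disjoint vertex sets. Neither is a real obstacle.
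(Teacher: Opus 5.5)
Your proof is correct and follows the paper's argument. Like the paper, you restrict to the shortcut skeleton $H$, obtain survival probability $a$ from the fixed point of $s\mapsto 1-(1-p^2s)^2$, and route hubs and path vertices through $c_e$ to the child $v$ to get the bounds $p^2a$ and $p^3a$. The only difference is that you cite the standard Galton--Watson extinction theorem, which gives survival probability exactly $a$, whereas the paper proves the needed lower bound $a_N\ge a$ directly by monotone induction and continuity from above.
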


\begin{proof}
Since $H$ is a subgraph of $G$, any open connection to infinity inside $H$ is also one inside $G$, so it suffices to work in $H$.

Fix an original vertex $u$ and let $a_N$ denote the probability that, conditionally on $u$ being open, $u$ is connected to level $N$ below it by an open path in the descendant shortcut subtree of $u$. Then $a_0=1$, and decomposing on the two forward branches of $u$ gives
\[
  a_N=f(a_{N-1}),\qquad f(s):=1-\bigl(1-p^2s\bigr)^2 ,
\]
because a given branch, through the hub $c_e$ to the child $v$, succeeds precisely if $c_e$ is open (probability $p$), $v$ is open (probability $p$) and $v$ reaches level $N-1$ below itself (probability $a_{N-1}$), and the two branches are independent. $f$ is increasing on $[0,1]$, so if $f$ has a positive fixed point in $[0,1]$, we may use it as a lower bound for the recursion and thus for $a_N$.

Indeed, the two nonnegative fixed points of $f$ are $s=0$ and
\[
s=a:=\frac{2p^2-1}{p^4},
\]
and $a>0$ exactly when $p^2>1/2$, while $a\le 1$ because $p^4-2p^2+1=(p^2-1)^2\ge0$. Suppose for induction that $a_{N-1} \in [a,1]$. Since $f$ is increasing on $[0,1]$, $a_N = f(a_{N-1})\geq f(a) = a$. Since also $a_0=1\geq a$, by induction, we have that $a_N\geq a$ for every $N$.

The events $\{u\text{ reaches level }N\}$ are nested decreasing in $N$, so by continuity from above,
\[
  \mathbb{P}_p\bigl(u\leftrightarrow\infty \mid u \text{ open}\bigr)\ \ge\ \lim_{N\to\infty}a_N\ \ge\ a .
\]
Hence every original vertex satisfies $\mathbb{P}_p(u\leftrightarrow\infty)\ge pa$.

Let $c_e$ be the hub of the gadget over $e=uv$, with $v$ the child endpoint. On the event that $c_e$ is open, $v$ is open, and $v$ has forward survival in the descendant shortcut subtree, the vertex $c_e$ is connected to infinity. Therefore $\mathbb{P}_p(c_e\leftrightarrow\infty)\ge p^2a$.

Similarly, for a path vertex $x_{e,j}$, the route $x_{e,j}-c_e-v$ shows that the event that $x_{e,j}$, $c_e$ and $v$ are all open and $v$ has forward survival is contained in $\{x_{e,j}\leftrightarrow\infty\}$, so $\mathbb{P}_p(x_{e,j}\leftrightarrow\infty)\ge p^3a$.

These three cases exhaust $V(G)$, and $p^3a\le p^2a\le pa$, which gives the stated bound.
\end{proof}

\section{Super-exponential cutset growth at the root}\label{sec:count}

Fix $m\ge1$. For each edge $e\in E_m$ choose an index $j_e\in\{1,\dots,L_m\}$, and for the resulting function $j:E_m\to\{1,\dots,L_m\}$ define
\[
  W(j):=\{c_e:\ e\in E_m\}\ \cup\ \{x_{e,j_e}:\ e\in E_m\},
  \qquad\text{so}\qquad
  |W(j)|=2|E_m|=2^{m+1}.
\]
Note that $\rho\notin W(j)$.

\begin{lemma}\label{lem:cut}
For every $m\ge1$ and every choice function $j$, the set $W(j)$ is a minimal vertex cutset from $\rho$ to infinity.
\end{lemma}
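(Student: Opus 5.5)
Two things need checking: that $W(j)$ separates $\rho$ from infinity, and that removing any single element of $W(j)$ destroys this.

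\emph{Cutset property.} Let $A$ be the set of vertices of $G$ lying in gadgets over edges of $E_1,\dots,E_{m-1}$, together with the original vertices of levels $0,\dots,m-1$. This set is finite. I will show that the component of $\rho$ in $G\setminus W(j)$ is contained in $A$ together with finitely many path vertices in gadgets over $E_m$. The key structural fact is that every vertex outside the gadgets over levels $\le m$ is reached from $\rho$ only through some gadget over an edge $e=uv\in E_m$: the original vertex $v$ at level $m$ separates its descendants from the rest of $G$. So any path from $\rho$ to infinity must traverse some gadget over $e\in E_m$ from $u$ to $v$. Inside that gadget, with $c_e$ deleted, the remaining vertices $u,x_{e,1},\dots,x_{e,L_m},v$ induce just the long path, because the spokes and shortcuts all meet $c_e$. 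Deleting $x_{e,j_e}$ therefore disconnects $u$ from $v$ within the gadget. Hence the component of $\rho$ avoids every level-$m$ original vertex, and so it is finite.

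\emph{Minimality.} Fix $e=uv\in E_m$. I will exhibit an infinite path from $\rho$ that avoids $W(j)\setminus\{w\}$, first for $w=c_e$ and then for $w=x_{e,j_e}$.
\begin{itemize}
  \item From $\rho$, descend to $u$ along shortcut-skeleton edges through hubs of levels $<m$; none of these vertices is in $W(j)$.
  \item If $w=c_e$, continue $u\to c_e\to v$.
  \item If $w=x_{e,j_e}$, continue along the long path $u\to x_{e,1}\to\cdots\to x_{e,L_m}\to v$. The only vertex of $W(j)$ on this path is $x_{e,j_e}$, which has been restored.
  \item From $v$, continue downward forever through the shortcut skeleton of levels $>m$, which contains no vertex of $W(j)$.
\end{itemize}
Thus no proper subset of $W(j)$ is a cutset. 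Proper supersets of removed elements are irrelevant here, since any proper subset omits at least one element $w$, and the path above still avoids the subset.

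Finally, $\rho\notin W(j)$ because $m\ge1$. The only step needing care is the separation claim in the cutset property, namely that $v$ separates the descendant part of the graph and that the gadget minus $c_e$ is exactly a path. Both follow directly from the construction, so I expect no real obstacle.
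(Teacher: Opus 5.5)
Your proof is correct and follows essentially the same route as the paper. For the cutset property, you show that the hub $c_e$ kills the shortcut and all spokes while $x_{e,j_e}$ cuts the long path, so the component of $\rho$ cannot reach any level-$m$ original vertex and is therefore finite; for minimality, you restore a single element of $W(j)$, exhibit an infinite route through that gadget, and pass to arbitrary proper subsets by monotonicity, with your explicit observations (that $v$ separates its descendant part and that the gadget minus $c_e$ is exactly the long path) merely spelling out the paper's assertion that ``these are the only routes through the gadget.''
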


\begin{proof}
First, $W(j)$ is a vertex cutset. Let $e=uv\in E_m$ with $u$ at level $m-1$. In $G\setminus W(j)$ the shortcut route $u-c_e-v$, or any other route using $c_e$ (for example, those which traverse spoke edges), is destroyed by the removal of $c_e$, and the long route $u-x_{e,1}-\cdots-x_{e,L_m}-v$ is destroyed by the removal of $x_{e,j_e}$; these are the only routes through the gadget of $e$. Hence no level-$m$ gadget can be traversed. Every path from $\rho$ to infinity in $G$ must traverse some level-$m$ gadget, so the connected component of $\rho$ in $G\setminus W(j)$ is contained in the union of the (finitely many) gadgets of levels $1,\dots,m$ together with the original vertices of levels $0,\dots,m-1$. Each such gadget is finite, so this component is finite.

Now let $y\in W(j)$; we show that $W(j)\setminus\{y\}$ is not a vertex cutset. Observe that $W(j)$ meets only gadgets of level $m$, so all gadgets of levels $1,\dots,m-1$ and all gadgets of levels $>m$ are untouched.

If $y=c_e$ for some $e=uv\in E_m$, then in $G\setminus(W(j)\setminus\{y\})$ the vertex $u$ is connected to $\rho$ through untouched gadgets, the route $u-c_e-v$ is available, and below $v$ the entire descendant decorated subtree is untouched, so $\rho$ is connected to infinity.

If $y=x_{e,j_e}$ for some $e=uv\in E_m$, the same argument applies with the long path $u-x_{e,1}-\cdots-x_{e,L_m}-v$ in place of the shortcut.

Finally, if $W'\subsetneq W(j)$ is any proper subset, pick $y\in W(j)\setminus W'$; then $G\setminus W'$ contains $G\setminus(W(j)\setminus\{y\})$ as a subgraph, in which $\rho$ is already connected to infinity. So no proper subset of $W(j)$ is a vertex cutset, and $W(j)$ is minimal.
\end{proof}

\begin{proposition}\label{prop:count}
$\displaystyle\sup_{n\ge1}\bigl|\mathcal{Q}^{\mathrm{site}}_n(\rho)\bigr|^{1/n}=\infty$, and consequently $\kappa^{\mathrm{site}}(G)=\infty$.
\end{proposition}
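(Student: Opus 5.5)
The plan is to count the cutsets produced by Lemma~\ref{lem:cut} and compare that count with their size. Fix $m\ge1$ and set $n_m:=2^{m+1}$. By Lemma~\ref{lem:cut}, every choice function $j:E_m\to\{1,\dots,L_m\}$ gives a set $W(j)\in\mathcal{Q}^{\mathrm{site}}_{n_m}(\rho)$.

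The first step is to check that the map $j\mapsto W(j)$ is injective. The gadgets of distinct edges $e\in E_m$ have disjoint sets of path vertices. The only path vertex of the gadget of $e$ lying in $W(j)$ is $x_{e,j_e}$, so $j_e$ can be read off from $W(j)$. Since $|E_m|=2^m$, this gives
\[
  \bigl|\mathcal{Q}^{\mathrm{site}}_{n_m}(\rho)\bigr|\ \ge\ L_m^{\,|E_m|}=(m^2)^{2^m}.
\]

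Next I take $n_m$-th roots:
\[
  \bigl|\mathcal{Q}^{\mathrm{site}}_{n_m}(\rho)\bigr|^{1/n_m}\ \ge\ (m^2)^{2^m/2^{m+1}}=m.
\]
This tends to infinity as $m\to\infty$, so $\sup_{n\ge1}|\mathcal{Q}^{\mathrm{site}}_n(\rho)|^{1/n}=\infty$. For the consequence, $q^{\mathrm{site}}_n\ge|\mathcal{Q}^{\mathrm{site}}_n(\rho)|$ by the definition of the supremum over vertices. Hence $\kappa^{\mathrm{site}}(G)\ge\sup_n|\mathcal{Q}^{\mathrm{site}}_n(\rho)|^{1/n}=\infty$, using the conventions for the value $\infty$ if it arises.

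There is no real obstacle here, since all the combinatorial work is already done in Lemma~\ref{lem:cut}. The one point needing care is the injectivity check. It relies on the hub part $\{c_e:e\in E_m\}$ of $W(j)$ being the same for every $j$, and on the path-vertex parts of different gadgets being disjoint. It is also worth noting why the gadget length was chosen as $L_m=m^2$: any $L_m\to\infty$ makes the per-vertex root $L_m^{1/2}$ unbounded.
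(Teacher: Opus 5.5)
Your proposal is correct and follows the paper's proof essentially verbatim. It uses injectivity of $j\mapsto W(j)$ to get $\bigl|\mathcal{Q}^{\mathrm{site}}_{n_m}(\rho)\bigr|\ge (m^2)^{2^m}$, and then takes $n_m$-th roots to obtain the lower bound $m\to\infty$; your explicit injectivity check (the hub part is the same for every $j$ and the path-vertex parts of different gadgets are disjoint) simply spells out the paper's remark that $j$ is recoverable from $W(j)$.
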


\begin{proof}
Distinct choice functions $j:E_m\to\{1,\dots,L_m\}$ give distinct sets $W(j)$, since $j$ is recoverable from $W(j)$. By Lemma~\ref{lem:cut} all of them lie in $\mathcal{Q}^{\mathrm{site}}_{n_m}(\rho)$ with $n_m:=2^{m+1}$, so
\[
  \bigl|\mathcal{Q}^{\mathrm{site}}_{n_m}(\rho)\bigr|\ \ge\ L_m^{|E_m|}=\bigl(m^2\bigr)^{2^m}.
\]
Therefore
\[
  \bigl|\mathcal{Q}^{\mathrm{site}}_{n_m}(\rho)\bigr|^{1/n_m}\ \ge\ \Bigl(\bigl(m^{2}\bigr)^{2^m}\Bigr)^{1/2^{m+1}}=m ,
\]
and letting $m\to\infty$ gives the first claim. Since $q^{\mathrm{site}}_{n}\ge|\mathcal{Q}^{\mathrm{site}}_{n}(\rho)|$ for every $n$, we get $\kappa^{\mathrm{site}}(G)=\infty$.
\end{proof}

\section{Uniform transience}\label{sec:trans}

\begin{proposition}\label{prop:trans}
Give every edge of $G$ unit conductance. Then $R_G(z\leftrightarrow\infty)\le4$, equivalently $\mathcal{C}_G(z\leftrightarrow\infty)\ge\frac14$, for every $z\in V(G)$. In particular, $G$ is uniformly transient.
\end{proposition}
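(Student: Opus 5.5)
We will bound effective resistance to infinity via Rayleigh monotonicity. Removing edges can only increase resistance, so $R_G(z\leftrightarrow\infty)\le R_H(z\leftrightarrow\infty)$ plus a short connection cost from $z$ into $H$, whenever $z$ is joined to $H$ by a path we keep. Concretely, for each $z$ we will exhibit a unit flow from $z$ to infinity in $G$ whose energy is at most $4$, using Thomson's principle $R(z\leftrightarrow\infty)=\inf\{\mathcal{E}(\theta)\}$ over unit flows.

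\textbf{Original vertices.} For an original vertex $u$ at any level, we use the flow in the descendant subtree of $u$ in $H$ that splits equally at every branching: through each of the two children gadgets it sends $1/2$, then $1/4$, and so on. Each generation of $T$-edges corresponds to two edges of $H$ (the subdivision), and at generation $k$ below $u$ there are $2^k$ subdivided edges each carrying flow $2^{-k}$. The energy is therefore
\[
  \sum_{k\ge1} 2^k\cdot 2\cdot 2^{-2k}=2\sum_{k\ge1}2^{-k}=2 .
\]
So $R_G(u\leftrightarrow\infty)\le 2$.

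\textbf{Hubs and path vertices.} For a hub $c_e$ with $e=uv$, $v$ the child, we route the unit flow along the single edge $c_ev$ (energy $1$) and then use the flow from $v$ above, giving energy at most $1+2=3$. For a path vertex $x_{e,j}$ we route along the spoke $x_{e,j}c_e$ (energy $1$), then along $c_ev$ (energy $1$), then continue with the flow from $v$, giving energy at most $1+1+2=4$. Alternatively we can appeal to the triangle inequality for effective resistance, $R(z\leftrightarrow\infty)\le R(z\leftrightarrow v)+R(v\leftrightarrow\infty)$, with $R(z\leftrightarrow v)\le d_G(z,v)\le 2$. Either way every vertex satisfies $R_G(z\leftrightarrow\infty)\le4$, hence $\mathcal{C}_G(z\leftrightarrow\infty)\ge\tfrac14$.

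The only step needing care is verifying that concatenating a path flow with a tree flow yields a legitimate unit flow whose energy is the sum. This holds because the path edges ($x_{e,j}c_e$ and $c_ev$) are disjoint from the descendant subtree of $v$ in $H$, so no edge carries superposed flow. The remaining steps are routine, so no real obstacle is expected. Since these unit-conductance flows have finite energy, transience follows, and the uniform bound gives uniform transience as in \eqref{eq:ut}.
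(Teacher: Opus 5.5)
Your proof is correct. It uses the same decomposition as the paper: a bound of $2$ at original vertices from the descendant shortcut subtree in $H$, then one extra unit resistor to reach a hub and one more to reach a path vertex. The difference is that you use the dual variational tool.

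\textbf{The paper's route.} It computes the subtree resistance with the series--parallel recursion $r_N=1+r_{N-1}/2$ and passes to infinity through an exhaustion. It then moves from $H$ to $G$ by Rayleigh monotonicity. Hubs and path vertices are handled by the triangle inequality for effective resistance on $V(G)\cup\{\infty\}$, which the paper asserts as a ``straightforward extension'' of the finite case.

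\textbf{Your route.} You exhibit explicit unit flows and use only the easy direction of Thomson's principle on the infinite network: $R(z\leftrightarrow\infty)\le\mathcal{E}(\theta)$ for every unit flow $\theta$ from $z$ to $\infty$. This follows by restricting $\theta$ to a wired exhaustion and applying the finite principle.

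\textbf{What each buys.}
\begin{enumerate}
\item You never need the metric property at infinity. Your edge-disjoint concatenation of the path $x_{e,j}\to c_e\to v$ with the tree flow from $v$ replaces it.
\item The passage from $H$ to $G$ is immediate, because a flow supported on a subgraph is already a flow on $G$.
\item Nothing is lost in sharpness. By symmetry, your equal-splitting flow is the unit current flow of the subdivided binary tree, so its energy $2$ equals the paper's exact value $\lim_N r_N=2$.
\end{enumerate}
Your parenthetical alternative via $R(z\leftrightarrow\infty)\le R(z\leftrightarrow v)+R(v\leftrightarrow\infty)$ is exactly the paper's route for the non-original vertices.
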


\begin{proof}
Fix an original vertex $u$ and work first inside the descendant shortcut subtree of $u$ in $H$, which is a binary tree with each edge subdivided once. For $N\ge0$ let $r_N$ be the effective resistance from $u$ to level $N$ below $u$ in this network, with all level-$N$ vertices wired together. Then $r_0=0$, and decomposing on the two branches of $u$, each consisting of two unit resistors in series (through the hub) followed by a copy of the depth-$(N-1)$ network, gives two parallel branches of resistance $2+r_{N-1}$ each, so by the series and parallel laws \cite[Section~2.3]{lyonsProbabilityTreesNetworks2017},
\[
  r_N=\frac{2+r_{N-1}}{2}=1+\frac{r_{N-1}}{2},
  \qquad\text{whence}\qquad
  r_N=2\bigl(1-2^{-N}\bigr).
\]
Since the effective resistance from a vertex to infinity is the increasing limit of the effective resistances to the boundaries of an exhaustion \cite[Section~2.2]{lyonsProbabilityTreesNetworks2017}, the resistance from $u$ to infinity within the descendant shortcut subtree equals $\lim_{N\rightarrow \infty} r_N=2$. That subtree is a subnetwork of $G$, so by Rayleigh monotonicity \cite[Section~2.4]{lyonsProbabilityTreesNetworks2017}
\[
  R_G(u\leftrightarrow\infty)\le 2 \qquad\text{for every original vertex } u.
\]

Effective resistance is a metric on $V(G)\cup\{\infty\}$; this is a straightforward extension of the finite-network case \cite[Exercise~2.67]{lyonsProbabilityTreesNetworks2017} using the exhaustion definition of effective resistance to infinity. If $c_e$ is the hub of the gadget over $e=uv$ with $v$ the child endpoint, then $c_e\sim v$ gives $R_G(c_e\leftrightarrow v)\le1$, so
\[
  R_G(c_e\leftrightarrow\infty)\le R_G(c_e\leftrightarrow v)+R_G(v\leftrightarrow\infty)\le 1+2=3 .
\]
If $x_{e,j}$ is a path vertex, then $x_{e,j}\sim c_e$ and the same argument gives
\[
  R_G(x_{e,j}\leftrightarrow\infty)\le 1+R_G(c_e\leftrightarrow\infty)\le 4 .
\]
These cases exhaust $V(G)$.
\end{proof}

\begin{proof}[Proof of Theorem~\ref{thm:main}]
Take $G$ and $\rho$ as in Section~\ref{sec:constr}. Part (i) is Lemma~\ref{lem:perc}, part (ii) is Proposition~\ref{prop:trans}, and part (iii) is Proposition~\ref{prop:count}. Part (i) together with part (iii) contradicts Claim~\ref{conj:A}, and part (ii) together with part (iii) contradicts Claim~\ref{conj:B}.
\end{proof}

\printbibliography

@article{easoCountingMinimalCutsets2025,
  title = {Counting Minimal Cutsets and $p_c<1$},
  author = {Easo, Philip and Severo, Franco and Tassion, Vincent},
  date = {2025},
  journaltitle = {Forum of Mathematics, Pi},
  shortjournal = {Forum of Mathematics, Pi},
  volume = {13},
  eprint = {2412.04539},
  eprinttype = {arXiv},
  eprintclass = {math},
  pages = {e23},
  issn = {2050-5086},
  doi = {10.1017/fmp.2025.10011},
  url = {http://arxiv.org/abs/2412.04539},
  urldate = {2026-01-27}
}

@book{lyonsProbabilityTreesNetworks2017,
  title = {Probability on {{Trees}} and {{Networks}}},
  author = {Lyons, Russell and Peres, Yuval},
  date = {2017},
  series = {Cambridge {{Series}} in {{Statistical}} and {{Probabilistic Mathematics}}},
  publisher = {Cambridge University Press},
  location = {Cambridge},
  doi = {10.1017/9781316672815},
  url = {https://www.cambridge.org/core/books/probability-on-trees-and-networks/4249FD4F1D64691AAD5314AEBFAC7ABF},
  urldate = {2026-02-17},
  isbn = {978-1-107-16015-6}
}

\end{document}